\definecolor{dblue}{rgb}{0,0,.6}
\renewcommand*{\backrefalt}[4]{
	\ifcase #1 (Not cited)
	\or        (Cited on page~#2)
	\else      (Cited on pages~#2)
	\fi}
\newcommand{\bbZ}{\mathbb{Z}}
\newcommand{\bbQ}{\mathbb{Q}}
\newcommand{\bbR}{\mathbb{R}}
\newcommand{\bbC}{\mathbb{C}}
\newcommand{\bbP}{\mathbb{P}}
\renewcommand{\AA}{\mathcal{A}}
\newcommand{\DD}{\mathcal{D}}
\newcommand{\HH}{\mathcal{H}}
\newcommand{\LL}{\mathcal{L}}
\newcommand{\MM}{\mathcal{M}}
\newcommand{\XX}{\mathcal{X}}
\newcommand{\YY}{\mathcal{Y}}
\newcommand{\frgt}{\mathfrak{g}_{\mathrm{tot}}}
\newcommand{\frso}{\mathfrak{so}}
\renewcommand{\ge}{\geqslant}
\newcommand{\catMmot}{\mathsf{(Mot_{A})}}
\newcommand{\catMab}{\mathsf{(Mot_A^{ab})}}
\newcommand{\bfM}{\mathbf{M}}
\newcommand{\bfH}{\mathbf{H}}
\newcommand{\st}{\enskip |\enskip}
\newcommand{\sdot}{{\raisebox{0.16ex}{$\scriptscriptstyle\bullet$}}}
\newcommand{\emrp}{\mathrm{End}}
\newcommand{\ii}{i}
\newcommand{\Spin}{\mathrm{Spin}}
\newcommand{\lrarr}{\longrightarrow}
\newcommand{\hrarr}{\hookrightarrow}
\newtheorem{defn}{Definition}[section]
\newtheorem{prop}[defn]{Proposition}
\newtheorem{thm}[defn]{Theorem}
\newtheorem{cor}[defn]{Corollary}
\theoremstyle{remark}
  \newtheorem{rem}[defn]{Remark}
\title{Cohomology and Andr\'e motives of hyperk\"ahler orbifolds}
\author{Andrey Soldatenkov}
\address{Steklov Mathematical Institute of Russian Academy of Sciences\\
8 Gubkina Street\\ Moscow 119991\\ Russia}
\email{soldatenkov@mi-ras.ru}
\thanks{This work was performed
at the Steklov International Mathematical Center and supported
by the Ministry of Science and Higher Education of the Russian Federation
(agreement no. 075-15-2019-1614). The author is supported in part by Young Russian Mathematics award.}
\begin{document}

\begin{abstract}
One of the main tools for the study of compact hyperk\"ahler manifolds
is the natural action of the Looijenga--Lunts--Verbitsky Lie algebra on the cohomology
of such manifolds.
This also applies to the
mildly singular holomorphic symplectic varieties---hyperk\"ahler orbifolds,
allowing us to generalize the results of \cite{S1} to this setting.
\end{abstract}

\maketitle

\section{Introduction}

It has recently been established that the theory of irreducible holomorphic symplectic
manifolds may be generalized in a reasonable way to the situation when the
symplectic varieties are singular. The basics of the theory have been developed
by Beauville, Kaledin, Namikawa and others (see e.g. \cite{Be}, \cite{Ka}, \cite{Na} and references therein),
and more recently a version of the global Torelli
theorem for singular symplectic varieties has been proven by Bakker and Lehn \cite{BL}.
More can be said when the the symplectic varieties have only quotient singularities.
In this case the varieties possess metric properties similar to the smooth case,
namely they admit hyperk\"ahler metrics, see \cite{Ca}. This fact leads to a number
of consequences that have not been fully covered in the literature yet.
The purpose of this paper is to develop some of these aspects of the theory of
hyperk\"ahler orbifolds.

Existence of hyperk\"ahler metrics on symplectic orbifolds implies that their
singular cohomology with rational coefficients admits a natural Lie algebra action,
similar to the smooth case. Moreover, purity of the Hodge structures allows
us to define Andr\'e motives of hyperk\"ahler orbifolds and to generalize
the results of \cite{S1} to the orbifold setting. The main result is Theorem \ref{thm_main}
which allows us to prove that many of the known examples of projective
irreducible hyperk\"ahler orbifolds with $b_2\ge 4$ have abelian Andr\'e motives.
The importance of this statement stems from its relation to the Hodge
and Mumford--Tate conjectures, see \cite{S1}, \cite{F1}, \cite{F2}, \cite{FFZ}
for a detailed discussion of these relations in the case of smooth hyperk\"ahler
varieties.

\section{Hyperk\"ahler orbifolds}

\subsection{Complex orbifolds}

For a complex analytic variety $X$ its singular locus
will be denoted by $X^{\mathrm{sg}}$ and its smooth locus
by $X^{\mathrm{sm}}$.
For the purposes of this paper the term orbifold will be
synonymous with variety that has only quotient singularities.
More precisely, we will use the following definition.

\begin{defn}
Let $X$ be a normal, connected, complex analytic variety of dimension $n$. We will call $X$
an orbifold if any point of $X$ has an open neighbourhood
isomorphic to $U/G$, where $G\subset GL_n(\bbC)$ is a finite group
and $U\subset \bbC^n$ is an open $G$-invariant neighbourhood of zero.
\end{defn}

\begin{rem}
If $G_0\subset G$ is the subgroup generated by all pseudoreflections
contained in $G$, then $G_0$ is normal in $G$ and $U/G_0$ is smooth.
Therefore we may assume in the above definition that no $g\in G$,
$g\neq 1$ fixes a hyperplane. Under this assumption for any point
$x\in X$ the group $G$ is uniquely recovered as the local
fundamental group of $X^{\mathrm{sm}}$ around $x$.
\end{rem}

For a local chart $U/G \to X$ of an orbifold $X$ denote by $U^\circ$
the preimage of $X^{\mathrm{sm}}$ in $U$.
A differential form on $X$ is a differential form on $X^{\mathrm{sm}}$
whose pull-back to any local chart $U^\circ$ extends to a differential form on $U$.
A two-form is called K\"ahler if this extension is a K\"ahler form on $U$
for any chart. It has been shown by Satake \cite{Sa} that a number of
results analogous to the smooth case hold for orbifolds. In particular,
de Rham theorem, Poincar\'e duality with rational coefficients, Hodge
and Lefschetz decompositions of cohomology are among such results.

\subsection{Beauville-Bogomolov decomposition and hyperk\"ahler orbifolds}

By the fundamental group of an orbifold $X$ we will mean the group $\pi_1(X^{\mathrm{sm}})$,
and we will call $X$ simply connected if this group is trivial. A morphism of
complex analytic spaces $f\colon Y\to X$, where $X$, $Y$ are orbifolds, is called
an orbifold covering if $f$ is a topological covering over $X^{\mathrm{sm}}$.

We recall the orbifold version of the Beauville--Bogomolov decomposition theorem.
A simply connected $n$-dimensional orbifold $X$ will be called Calabi--Yau if it admits a K\"ahler metric with holonomy
equal to $SU(n)$ and hyperk\"ahler if it admits a K\"ahler metric with holonomy equal to $Sp(n/2)$.

\begin{thm}[{\cite[Theorem 6.4]{Ca}}]
Let $X$ be a compact K\"ahler orbifold with $c_1(X) = 0$. Then there exists a finite
orbifold covering $\pi\colon \widetilde{X}\to X$ with
$$\widetilde{X}\simeq T\times \prod_i Y_i\times \prod_j Z_j,$$
where $T$ is a compact complex torus, $Y_i$ are Calabi--Yau orbifolds and $Z_j$ are hyperk\"ahler orbifolds.
\end{thm}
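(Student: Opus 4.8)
The plan is to follow Beauville's original proof of the decomposition theorem, replacing each smooth-manifold input with its orbifold counterpart. First I would invoke the orbifold version of the Calabi--Yau theorem: since $X$ is compact K\"ahler with $c_1(X)=0$, there exists a Ricci-flat K\"ahler orbifold metric $g$ in each K\"ahler class. On the smooth locus $X^{\mathrm{sm}}$ this is a genuine Ricci-flat K\"ahler metric, so its restricted holonomy group is contained in $\SU(n)$; the orbifold structure should guarantee that parallel transport and holonomy extend across the singular points, giving a well-defined orbifold holonomy representation on the tangent space at a smooth base point.

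Next I would apply the de Rham decomposition theorem in the orbifold setting to the universal orbifold cover $\widehat{X}\to X$ (with deck group $\pi_1(X^{\mathrm{sm}})$). The holonomy representation splits the tangent space into a maximal flat summand and a sum of irreducible summands, and correspondingly $\widehat{X}$ should split isometrically as a product of a flat factor with non-flat irreducible factors. Because the holonomy lies in $\U(n)$, it preserves the complex structure $J$, so this splitting is by complex subbundles and the factors are themselves K\"ahler orbifolds; the decomposition is therefore holomorphic, not merely Riemannian.

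It then remains to identify the factors and to descend to a finite cover of $X$. By Berger's classification, an irreducible non-flat Ricci-flat K\"ahler orbifold has holonomy either $\SU(m)$ or $\Sp(m/2)$, giving a Calabi--Yau orbifold $Y_i$ or a hyperk\"ahler orbifold $Z_j$ respectively; these factors are compact and simply connected as orbifolds, matching the definitions above. For the flat factor I would invoke the orbifold analogue of the Cheeger--Gromoll splitting together with a Bieberbach-type argument: the flat part is a Euclidean orbifold, so a finite-index subgroup of its deck group acts by translations, producing a compact complex torus $T$ after passing to a finite cover. Finally, since $\pi_1(X^{\mathrm{sm}})$ permutes the isomorphic irreducible factors, passing to the finite-index subgroup stabilizing each factor should yield the desired product $\widetilde{X}\simeq T\times\prod_i Y_i\times\prod_j Z_j$ with $\widetilde{X}\to X$ a finite orbifold covering.

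The main obstacle is establishing the two Riemannian structure theorems---de Rham decomposition and Cheeger--Gromoll splitting---in the orbifold category, where the underlying space fails to be a manifold at the singular points. The delicate point is that $X^{\mathrm{sm}}$ is incomplete as a Riemannian manifold, so one must work with the orbifold metric directly, checking that geodesics, parallel transport, and the splitting of the holonomy representation all pass through the quotient singularities in a controlled way. Once these orbifold versions are in place, the holomorphicity of the splitting and Berger's classification proceed essentially as in the smooth case.
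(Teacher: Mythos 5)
This statement is not proved in the paper at all: it is imported verbatim from Campana \cite[Theorem~6.4]{Ca}, so there is no internal proof to compare against. Your sketch follows essentially the same route as Campana's actual argument (and Beauville's original one in the smooth case): orbifold Calabi--Yau theorem to get a Ricci-flat K\"ahler orbifold metric, de Rham decomposition of the universal orbifold cover, Cheeger--Gromoll splitting and a Bieberbach-type argument for the flat factor, and Berger's classification to identify the irreducible factors as $\SU(m)$ or $\Sp(m/2)$; you also correctly identify that the real work lies in pushing the Riemannian structure theorems through the quotient singularities, which is precisely the technical content of Campana's paper. One small organizational point: the compactness of the non-flat irreducible factors of the universal cover, which you assert in passing, is itself a consequence of the Cheeger--Gromoll splitting theorem (the maximal Euclidean factor splits off leaving a compact complement), not of the de Rham decomposition alone.
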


Assume that $X$ is a hyperk\"ahler orbifold with hyperk\"ahler metric $g$.
Let $I$ denote its complex structure. Since
the standard representation of $Sp(n/2)$ is quaternionionic,
the holonomy principle implies that $X$ admits two more complex structures
$J$ and $K$ with $IJ=-JI =K$. Integrability of these structures follows from
the fact that they are preserved by the Levi--Civita connection which is torsion-free.
We denote the corresponding K\"ahler forms by $\omega_I$, $\omega_J$ and $\omega_K$.
The form $\sigma_I = \omega_J+\ii\omega_K$ is holomorphic symplectic with respect to $I$.

A typical example of a hyperk\"ahler orbifold is a singular K3 surface obtained by
contracting $(-2)$-curves on a smooth K3 surface. Such orbifold admits a small deformation
that is again smooth. Hence it is more natural to think of such an example as
of a degeneration of a smooth hyperk\"ahler manifold, and we would like to focus
our attention on the case when the singularities can not be removed by deformation.
Therefore we introduce the following more restrictive notion.

\begin{defn}
A simply connected hyperk\"ahler orbifold $X$ satisfying the condition
$\mathrm{codim}_X X^{\mathrm{sg}}\ge 4$ will be called irreducible.
\end{defn}

The singularities of any compact complex orbifold are $\bbQ$-factorial (see \cite[Proposition 2.15]{BL}
for the discussion of $\bbQ$-factoriality in the complex analytic setting).
The singularities of any hyperk\"ahler orbifold are rational \cite{Be}.
For an irreducible hyperk\"ahler orbifold the singularities are moreover
terminal \cite[Theorem 3.4]{BL}.

\subsection{Deformations and moduli of irreducible hyperk\"ahler orbifolds}\label{sec_moduli}

Assume that $X$ is an irreducible hyperk\"ahler orbifold. A deformation of $X$
is a proper flat morphism of complex analytic spaces $\pi\colon \XX\to B$
with connected base $B$ and such that some fibre of $\pi$ is isomorphic to $X$.
We will denote the fibre of $\pi$ over a point $t\in B$ by $\XX_t$.
The condition on the codimension of $X^{\mathrm{sg}}$ implies (see \cite[Lemma 3.3]{Fu})
that any deformation of $X$ is locally trivial in the following sense.

\begin{defn} A morphism $\pi\colon \XX\to B$ of complex analytic spaces
is called locally trivial if for any point $x\in \XX$ some neighbourhood of $x$ in $\XX$ is biholomorphic
to $V_1\times V_2$ for some open neighbourhood $V_1$ of $x$ in the fibre $\XX_{\pi(x)}$
and some open neighbourhood $V_2$ of $\pi(x)$ in $B$, and $\pi|_{V_1\times V_2}$
is the projection to the second factor.
\end{defn}

Fix a basepoint $t_0$ such that $\XX_{t_0}\simeq X$.
It is known that locally trivial deformations of complex spaces are
trivial in the real analytic category, see \cite[Proposition 5.1]{AV}.
This means, in particular, that all fibres $\XX_t$ are isomorphic as
real analytic varieties and hence we have the monodromy action of $\pi_1(B,t_0)$
on $H^\sdot(X,\bbQ)$.

The moduli theory of hyperk\"ahler manifolds has been generalized to
the orbifold setting in \cite{Me}. As a consequence, it has been shown that
$H^2(X,\bbQ)$ carries a canonical symmetric bilinear form $q$ called
the BBF form, \cite[section 3.4]{Me}. This form has the following property:
there exists a constant $C_X\in \bbQ$ such that for any $a\in H^2(X,\bbQ)$
we have 
\begin{equation}\label{eqn_BBF}
q(a)^n = C_X \int_X a^{2n},
\end{equation}
where we use the intersection product on
the right hand side. The form $q$ is normalized so that it is integral
and primitive on the image of $H^2(X,\bbZ)$ in $H^2(X,\bbQ)$.
The signature of $q$ is $(3,b_2(X)-3)$.

We fix the lattice $\Lambda = H^2(X,\bbZ)/\mathrm{(torsion)}$ with the BBF form
and define the moduli space $\mathfrak{M}$ of marked hyperk\"ahler orbifolds
$(Y,\varphi)$ deformation equivalent to $X$, where
$\varphi\colon H^2(Y,\bbZ)/(\mathrm{torsion})\stackrel{\sim}{\to}\Lambda$
is a lattice isomorphism called marking, see \cite[section 3.5]{Me}.
For a symplectic form $\sigma\in H^0(Y,\Omega_Y^2)$ we have
$q(\sigma) = 0$ and $q(\sigma,\bar{\sigma}) > 0$.
Let $\DD\subset \bbP(\Lambda\otimes\bbC)$ be the period domain:
$$\DD = \{x\in\bbP(\Lambda\otimes\bbC) \st q(x) = 0,\, q(x,\bar{x}) > 0\}.$$
Then the period of $(Y,\varphi)$ is the point $\rho(Y,\varphi) = [\varphi(\sigma)]\in \DD$.

The moduli space $\mathfrak{M}$ is the set of isomorphism classes of
marked hyperk\"ahler orbifolds deformation equivalent to $X$ and
$\rho$ defines the period map $\mathfrak{M} \to \DD$.
The construction and the properties of $\mathfrak{M}$ in the orbifold
setting are completely analogous to the smooth case. In particular,
$\mathfrak{M}$ is a non-Hausdorff complex manifold of dimension $h^{1,1}(X)$
and $\rho$ is a local isomorphism. 

Recall the definition of the Hausdorff reduction of $\mathfrak{M}$.
We call two points $x,y\in \mathfrak{M}$ inseparable if for any
open neighbourhoods $U_x$ of $x$ and $U_y$ of $y$ we have
$U_x\cap U_y \neq \emptyset$. This turns out to be an equivalence
relation, and we define $\overline{\mathfrak{M}}$ to be
the set of equivalence classes. Then $\overline{\mathfrak{M}}$ is
a Hausdorff complex manifold, and clearly $\rho$ factors through $\overline{\mathfrak{M}}$.
For a fixed connected component $\MM$ of $\mathfrak{M}$ denote by
$\overline{\MM}$ its Hausdorff reduction.

\begin{thm}[{\cite[Theorem 5.9]{Me}}]\label{thm_torelli}
The period map $\rho$ defines an isomorphism between $\overline{\MM}$ and $\DD$.
\end{thm}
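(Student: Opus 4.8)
The plan is to adapt Verbitsky's proof of the global Torelli theorem to the orbifold setting. From the discussion preceding the statement we already know that $\rho$ is a local isomorphism and that $\dim_\bbC \overline{\MM} = h^{1,1}(X) = \dim_\bbC \DD$, so the task is to upgrade this local statement to a global bijection after passing to the Hausdorff reduction. I would organize the argument around three ingredients: surjectivity of $\rho$, the covering property of $\rho$, and the simple connectedness of the target $\DD$. The analytic input that makes all of this available in the singular case is the existence of hyperk\"ahler (Ricci-flat K\"ahler) metrics on symplectic orbifolds, established in \cite{Ca}, which produces the twistor families used below.

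First I would establish surjectivity by means of the twistor family. For a marked orbifold $(Y,\varphi)$ with its hyperk\"ahler metric, the complex structures $aI+bJ+cK$ with $a^2+b^2+c^2=1$ form a family over $\bbP^1$ whose periods sweep out a rational curve (a twistor line) in $\DD$. Such a line lifts tautologically to $\MM$, and one checks that through a generic point of $\DD$ there pass enough twistor lines that any two points of $\DD$ can be joined by a chain of them. Since the image of $\rho$ is open, being a local isomorphism, and is a union of twistor lines through each of its points, this twistor-connectedness forces the image to be all of $\DD$.

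The central step is to show that $\rho\colon\overline{\MM}\to\DD$ is a covering map. As it is already a local homeomorphism between complex manifolds of equal dimension, it suffices to verify the path-lifting property, and twistor lines again supply the mechanism: given a twistor line in $\DD$ and a lift of one of its points to $\overline{\MM}$, the associated twistor family lifts the entire line, and twistor-connectedness then propagates lifting to arbitrary paths. The point where the Hausdorff reduction is essential is injectivity on fibres: one must know that two marked orbifolds with the same period are bimeromorphic and that bimeromorphic hyperk\"ahler orbifolds define inseparable points of $\MM$, so that they are identified in $\overline{\MM}$ and $\rho$ becomes fibrewise injective. This is the orbifold analogue of Huybrechts' description of inseparable points, and I expect it to be the main obstacle, since it requires the birational geometry of hyperk\"ahler orbifolds---in particular the terminality of the singularities of irreducible orbifolds noted above---to match up models with identical periods.

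Finally I would invoke the topology of the period domain. Writing a period as $[x]$ with $x = u + \ii v$, the conditions $q(x)=0$ and $q(x,\bar x)>0$ say exactly that $u$ and $v$ span an oriented positive-definite $2$-plane in $\Lambda\otimes\bbR$, so $\DD$ is the space of such planes for the form of signature $(3,b_2-3)$. Realizing it as the homogeneous space for $\SO(3,b_2-3)$ and retracting onto its compact model gives $\DD\simeq \SO(3)/\SO(2)$, the $2$-sphere, which is connected and simply connected. A connected covering of a simply connected space is a homeomorphism, so the covering map $\rho$ is a bijection; being in addition a local isomorphism of complex manifolds, it is an isomorphism, as claimed.
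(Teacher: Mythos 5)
The paper does not prove this statement at all: it is imported wholesale from Menet \cite[Theorem 5.9]{Me}, so there is no internal proof to compare against; the comparison has to be with Menet's proof, which does follow the Verbitsky-style strategy you outline (twistor lines, Hausdorff reduction, topology of $\DD$). Your skeleton is therefore the right one, and your topological claim that $\DD$ deformation retracts onto $\SO(3)/\SO(2)\simeq S^2$, hence is connected and simply connected, is correct. But two of your steps have genuine gaps. The first is the covering property. A local homeomorphism of connected manifolds of equal dimension onto a simply connected target need not be a covering or injective --- the inclusion of a proper open subset of $\bbR^2$ into $\bbR^2$ already fails --- so the entire content of the theorem sits in the lifting statement, and ``twistor-connectedness then propagates lifting to arbitrary paths'' is not an argument. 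A twistor line through $\rho(p)$ lifts through $p$ only when it comes from a K\"ahler class on the orbifold represented by $p$; to get enough liftable lines one needs the orbifold analogue of Huybrechts' description of the K\"ahler cone (for Picard rank zero the K\"ahler cone is the whole positive cone), then existence \emph{and uniqueness} of lifts of \emph{generic} twistor lines, and then a genuine argument converting generic-twistor-path lifting into bijectivity. This is precisely the delicate part of Verbitsky's proof (historically the source of the gap corrected in his erratum), and it is the bulk of what Menet has to redo in the orbifold category; it cannot be waved through in one sentence.

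The second gap is that your injectivity step is circular. The implication ``two marked orbifolds in the same component with equal periods are bimeromorphic'' is not an available input: in the smooth case it is a \emph{consequence} of the global Torelli theorem (combined with Huybrechts--Markman theory), and it has no independent proof from birational geometry. What Huybrechts' theorem and Menet's orbifold analogue actually supply is the opposite direction --- inseparable points of $\mathfrak{M}$ have bimeromorphic underlying orbifolds and equal periods --- and this is used earlier in the argument, to show that the Hausdorff reduction $\overline{\MM}$ is a complex manifold and that $\rho$ descends to a local homeomorphism on it. Injectivity of the descended map is then an \emph{output} of the covering/twistor argument together with simple connectedness of $\DD$; only afterwards does one deduce that equal period implies inseparable, hence bimeromorphic. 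Feeding that deduction in at the start, as you propose, assumes the theorem being proved. (Note also that even granting abstract bimeromorphy, one would still need the bimeromorphic map to be compatible with the markings for the two given points to be inseparable.) Relatedly, if your ``fibrewise injectivity'' were available as an input, the final covering-space paragraph would be redundant --- a sign that the division of labour among your three ingredients is not the one the actual proof uses.
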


Below we will need to consider special subvarieties in the period domain
of the following form. Fix an element $h\in \Lambda$ such that $q(h)>0$.
Let 
$$\DD_h = \{x\in\bbP(\Lambda\otimes\bbC) \st q(x) = q(x,h) = 0,\, q(x,\bar{x}) > 0\} \subset \DD.$$
The points of $\DD_h$ are periods of orbifolds that admit a line bundle
with first Chern class $h$. All these orbifolds are projective by \cite[Theorem 1.2]{Me}.
Denote by $\mathfrak{M}_h$ and $\MM_h$ the preimages of $\DD_h$ in $\mathfrak{M}$
and $\MM$ respectively.

\subsection{The Looijenga--Lunts--Verbitsky Lie algebra action on cohomology}

If $X$ is an irreducible hyperk\"ahler orbifold with K\"ahler forms $\omega_I$, $\omega_J$
and $\omega_K$, let us denote the corresponding Lefschetz operators
by $L_I$, $L_J$ and $L_K$. Let $\Lambda_I$, $\Lambda_J$ and $\Lambda_K$ be the dual Lefschetz operators.
A pointwise computation identical to the case when $X$ is smooth (see e.g. \cite{S2})
shows that the dual Lefschetz
operators commute with each other and we have $[\Lambda_J, L_K] = W_I$
(and similarly for the cyclic permutations of $I$, $J$, $K$),
where $W_I$ is the operator that acts on the differential
forms of $I$-type $(p,q)$ as multiplication by $\ii (p-q)$.
Verbitsky \cite{V1} deduces from this that the Lie subalgebra of $\emrp(\Lambda^\sdot T^*\!X)$
generated by the above operators is isomorphic
to $\frso(4,1)$.

The Lefschetz operators and their duals act on the cohomology of $X$,
so we have an embedding of Lie algebras $\frso(4,1)\hrarr \emrp(H^\sdot (X,\bbC))$.
Let $\frgt(X)$ be the subalgebra of $\emrp(H^\sdot (X,\bbC))$ generated
by all such embeddings for all hyperk\"ahler metrics on $X$.
This is the ``total Lie algebra'' of Looijenga--Lunts \cite{LL}
and its structure was determined in \cite{V2}. 

We recall the description of $\frgt(X)$. Let $V = H^2(X,\bbQ)$
and let $\tilde{V} = \langle e_0\rangle \oplus V\oplus \langle e_4\rangle$ be graded with $e_k$ of degree $k$
and $V$ in degree 2. Define $\tilde{q} \in S^2\tilde{V}^*$ such that $\tilde{q}|_V = q$,
$\langle e_0, e_4\rangle$ orthogonal to $V$ with $q(e_0)=q(e_4)=0$ and $q(e_0,e_4) = 1$.

\begin{prop}[\cite{V2}, see also {\cite[Proposition 2.9]{S2}}]\label{prop_LLV}
The graded Lie algebra $\frgt(X)$ is isomorphic to $\frso(\tilde{V},\tilde{q})$.
The subalgebra $\frso(V,q)\subset\frgt(X)$ acts on $H^\sdot(X,\bbQ)$ by derivations.
\end{prop}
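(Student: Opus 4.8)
The plan is to follow Verbitsky's determination of the total Lie algebra in the smooth case and to verify that every ingredient survives in the orbifold setting. Two of these ingredients are already in place: by Satake's results quoted in Section 2.1, $X$ enjoys the full Hodge and hard-Lefschetz package on $H^\sdot(X,\bbC)$, and the pointwise computation recalled just before the statement provides, for each hyperk\"ahler metric, a copy of $\frso(4,1)\hrarr\emrp(H^\sdot(X,\bbC))$. All these copies share the grading operator $H$ acting on $H^k(X)$ as multiplication by $k-n$, and $\mathrm{ad}(H)$ thereby equips $\frgt(X)$ with a $\bbZ$-grading.

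First I would pin down the graded pieces. Since Lefschetz operators are multiplications they commute, so the weight-$(+2)$ part brackets to zero with itself, and dually for weight $-2$; hence $\frgt(X)$ is concentrated in degrees $-2,0,+2$. For a K\"ahler class $\omega$ the operator $L_\omega$ lies in degree $+2$ and, by hard Lefschetz, generates an $\frsl_2$-triple $(L_\omega,H,\Lambda_\omega)$ with $\Lambda_\omega$ in degree $-2$. Because the classes $\omega_I,\omega_J,\omega_K$ arising from all hyperk\"ahler metrics on $X$ together span the positive cone, and hence all of $V=H^2(X,\bbQ)$, the assignment $\omega\mapsto L_\omega$ identifies $\frgt(X)_{+2}$ with $V$, and likewise $\frgt(X)_{-2}\cong V$ via the duals.

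Next I would compute the mixed brackets $[L_u,\Lambda_v]\in\frgt(X)_0$. The heart of the matter, and the content of \cite{V2}, is that these close up, together with the $L$'s and $\Lambda$'s, into $\frso(\tilde{V},\tilde{q})$: the hyperbolic plane $\langle e_0,e_4\rangle$ records the $\frsl_2$-grading, while the restriction $\tilde{q}|_V=q$ is forced by the structure constants of these commutators. The essential numerical input is the Fujiki--BBF relation \eqref{eqn_BBF}, which expresses $\int_X\omega^{2n}$ through $q(\omega)$ and thereby fixes the coefficient of $q$ appearing in $[L_\omega,\Lambda_\omega]$; this is the same computation as in the smooth case and goes through verbatim once \eqref{eqn_BBF} is granted. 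One then reads off $\frgt(X)_0=\frso(V,q)\oplus\bbQ H$. That the summand $\frso(V,q)$ acts by derivations of the cup product, whereas $H$ does not, is proved exactly as in \cite[Proposition 2.9]{S2} and \cite{LL}: these operators are the infinitesimal symmetries of $H^\sdot(X,\bbQ)$ compatible with both the grading and Poincar\'e duality, pinned down by their action $D\mapsto L_{Dv}$ on the degree-$(+2)$ generators.

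The main obstacle is not any single calculation but the confirmation that this global structure theory transfers from manifolds to orbifolds. The per-metric $\frso(4,1)$ is pointwise and poses no difficulty, but Verbitsky's identification of $\frgt(X)$ relies globally on hard Lefschetz, the primitive (Lefschetz) decomposition, and the Fujiki relation. For $X$ an orbifold these rest, respectively, on Satake's Hodge theory quoted in Section 2.1 and on the BBF form \eqref{eqn_BBF} produced by the orbifold moduli theory of \cite{Me}. Once these are in hand, the Lie-algebra bookkeeping is identical to the smooth case, so the real task is the verification of hypotheses rather than any new structural argument.
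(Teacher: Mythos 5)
Your proposal is correct and takes essentially the same route as the paper: the paper gives no independent argument for this proposition, citing \cite{V2} and \cite[Proposition 2.9]{S2}, and its implicit justification for the orbifold extension is exactly the list of inputs you verify (the pointwise quaternionic computation carrying over verbatim, Satake's Hodge--Lefschetz theory for orbifolds, and the BBF--Fujiki relation \eqref{eqn_BBF} supplied by Menet's moduli theory). One caution: your inference that commutativity of the degree $\pm 2$ parts alone forces $\frgt(X)$ to be concentrated in degrees $-2,0,+2$ is not valid as stated---a graded Lie algebra generated by two abelian pieces in degrees $\pm 2$ can perfectly well have nonzero components in higher degrees, and the concentration genuinely requires the $\frsl_2$/Lefschetz property of the generators, being itself part of the Looijenga--Lunts--Verbitsky structure theorem---but since you defer that theorem to \cite{V2} in any case, this step is covered by your citation rather than by your ``hence''.
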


\subsection{Monodromy action on cohomology}

For an irreducible hyperk\"ahler orbifold $X$ denote by $\mathrm{Aut}^+(H^\sdot(X,\bbQ))$
the group of graded algebra automorphisms that act trivially in the top degree $H^{4n}(X,\bbQ)$.
It follows from
Proposition \ref{prop_LLV} that there exists a representation
\begin{equation}
\lambda\colon\Spin(V,q)\to \mathrm{Aut}^+(H^\sdot(X,\bbQ)).
\end{equation}

Let $\pi\colon \XX\to B$ be a deformation of $X$ with $\XX_{t_0} \simeq X$ for a basepoint $t_0\in B$.
For a covering $B'\to B$ we denote by $t_0'\in B'$ some preimage of $t_0$. The action of
$\pi_1(B,t_0)$ on the cohomology of $X$ clearly preserves the top degree cohomology classes.

\begin{prop}\label{prop_monodr}
There exists a finite covering $B'\to B$ such that the monodromy
representation $\pi_1(B',t_0')\to \mathrm{Aut}^+(H^\sdot(X,\bbQ))$
factors through $\lambda$. 
\end{prop}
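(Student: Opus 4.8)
The plan is to reduce the monodromy statement to a known structural fact about the LLV algebra together with the local Torelli theorem. The key idea is that the monodromy representation of $\pi_1(B,t_0)$ on $H^\sdot(X,\bbQ)$ is constrained by the fact that monodromy transformations preserve the cup product and the top-degree class, and that they are compatible with variation of Hodge structure. First I would recall that a locally trivial deformation induces a period map to the moduli space $\mathfrak{M}$, and after passing to a connected component $\MM$ and using the Torelli isomorphism $\overline{\MM}\simeq\DD$ from Theorem \ref{thm_torelli}, the monodromy action on $H^2(X,\bbQ)$ lands in the orthogonal group $\rmO(V,q)$, since it preserves the BBF form $q$ by its characterization in \eqref{eqn_BBF} (the intersection product and top class are preserved). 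In fact, because monodromy is orientation-preserving on the positive-definite $3$-space of the Hodge structure, the image of $\pi_1(B,t_0)$ in $\rmO(V,q)$ lies in the subgroup $\rmO^+(V,q)$ consisting of elements preserving the orientation of positive $3$-planes.

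Next I would use Proposition \ref{prop_LLV} to identify the relevant structure: the LLV algebra acts by the full $\frso(\tilde V,\tilde q)$, and the subalgebra $\frso(V,q)$ acts by derivations of the cohomology ring. The representation $\lambda\colon\Spin(V,q)\to\Aut^+(H^\sdot(X,\bbQ))$ integrates this Lie algebra action, so its image is precisely the connected subgroup generated by the derivations coming from $\frso(V,q)$. The crucial compatibility is that an element $g\in\pi_1(B,t_0)$ acting on $H^2(X,\bbQ)$ as an element $\bar g\in\rmO^+(V,q)$ must act on all of $H^\sdot(X,\bbQ)$ in a way compatible with the $\frgt(X)$-module structure, since this structure is canonical and hence preserved by the automorphism of $X$ (or rather by the parallel transport). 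The point is that the action on $H^2$ determines the action on the whole cohomology once we know the automorphism respects the LLV structure: an orientation-preserving orthogonal transformation that lifts to $\Spin(V,q)$ acts via $\lambda$, and the two actions agree because they agree on the generating subalgebra and on $H^2$.

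The main obstacle I anticipate is the discrepancy between $\rmO^+(V,q)$ and the image of $\Spin(V,q)$: the map $\Spin(V,q)\to\SO^+(V,q)$ is a double cover, and $\SO^+(V,q)$ is the identity component, which is an index-two subgroup of $\rmO^+(V,q)$ (the extra factor being $-1$ on reflections). Thus the image of $\pi_1(B,t_0)$ need not lie in the image of $\lambda$ on the nose; this is exactly why a \emph{finite} covering $B'\to B$ is needed. I would resolve this by observing that the obstruction is controlled by a homomorphism $\pi_1(B,t_0)\to\rmO^+(V,q)/\SO^+(V,q)\cong\bbZ/2$ (detecting whether monodromy lies in the identity component) composed with the connecting obstruction to lifting along the spin cover $\Spin(V,q)\to\SO^+(V,q)$, which is a class in $H^1(\pi_1(B,t_0),\bbZ/2)$. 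Since these together factor through a finite quotient of $\pi_1(B,t_0)$, pulling back to the corresponding finite covering $B'\to B$ kills the obstruction. Over $B'$ the monodromy then lands in $\SO^+(V,q)$ and lifts to $\Spin(V,q)$, so that the representation $\pi_1(B',t_0')\to\Aut^+(H^\sdot(X,\bbQ))$ factors through $\lambda$ as claimed.
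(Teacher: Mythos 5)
The central step of your proposal contains a genuine gap, and it is exactly the difficulty that the paper's proof is built to overcome. You assert that an element of $\mathrm{Aut}^+(H^\sdot(X,\bbQ))$ which respects the LLV structure and agrees with $\lambda(s)$ on $H^2$ must equal $\lambda(s)$, ``because they agree on the generating subalgebra and on $H^2$.'' But the cohomology ring of a hyperk\"ahler orbifold is \emph{not} generated by $H^2$, so a ring automorphism is not determined by its restriction to $H^2$; equivalently, the kernel of the restriction homomorphism $\alpha\colon \mathrm{Aut}^+(H^\sdot(X,\bbQ))\to \rmO(V,q)$ can be nontrivial (already in the smooth case: on generalized Kummer varieties there are automorphisms acting trivially on $H^2$ but nontrivially on higher cohomology). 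Moreover such an element $k\in\ker(\alpha)$ \emph{does} respect the LLV structure: conjugation by $k$ fixes every Lefschetz operator $L_a$ and, by uniqueness of $\frsl_2$-triples, every dual operator $\Lambda_a$, so $k$ centralizes $\frgt(X)$; the centralizer of $\frgt(X)$ on the isotypic components of $H^\sdot(X,\bbQ)$ has no reason to be trivial. Hence a monodromy transformation may a priori differ from $\lambda(s)$ by a nontrivial element of $\ker(\alpha)$, and your argument cannot rule this out. The paper's proof handles precisely this point: $\ker(\alpha)$ preserves the Hodge structure and the Hodge--Riemann forms, hence a positive definite scalar product, so $\ker(\alpha)(\bbR)$ is compact; since the monodromy preserves the \emph{integral} cohomology it lies in an arithmetic subgroup, and a torsion-free finite-index subgroup $\Gamma$ of it meets the compact group $\ker(\alpha)(\bbR)$ trivially (a discrete torsion-free subgroup of a compact group is trivial), so $\Gamma$ injects into $\SO(V,q)$ via $\alpha$. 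Your proposal never invokes the integral structure, and without it this step cannot be carried out.

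The lifting part of your plan is also problematic. The obstruction to lifting a homomorphism along the central extension $1\to\{\pm 1\}\to \Spin(V,q)\to \SO(V,q)\to 1$ lives in $H^2(\pi_1(B,t_0),\bbZ/2)$, not $H^1$ (an $H^1$-class measures the difference between two lifts), and a class in $H^2(\cdot,\bbZ/2)$ is not automatically killed by passing to a finite-index subgroup. More importantly, the lifting needed here is at the level of $\bbQ$-points, since $\lambda$ and the monodromy act on rational cohomology: the cokernel of $\Spin(V,q)(\bbQ)\to \SO(V,q)(\bbQ)$ is measured by the spinor norm with values in the \emph{infinite} group $\bbQ^{\times}/(\bbQ^{\times})^2$, so there is no a priori finite quotient through which the obstruction factors. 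What saves the day is again arithmeticity: the spinor norm takes only finitely many values on an arithmetic subgroup, which is why the paper can pass to a finite-index subgroup of $\Gamma$ whose image under $\alpha$ lies in the image of $\Spin(V,q)(\bbQ)$. So both halves of your argument---identifying the action on the full cohomology and performing the finite-cover reduction---require the arithmetic input that is absent from your proposal; the appeal to Theorem \ref{thm_torelli} and the period map, on the other hand, plays no role in the paper's proof and does not substitute for it.
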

\begin{proof}

Let $\mathrm{Aut}^\circ(H^\sdot(X,\bbQ))$ be the connected component of
the identity in the group $\mathrm{Aut}^+(H^\sdot(X,\bbQ))$.
After passing to a finite covering of $B$ we may assume that the monodromy
representation factors through $\mathrm{Aut}^\circ(H^\sdot(X,\bbQ))$.
Consider the action of $\mathrm{Aut}^\circ(H^\sdot(X,\bbQ))$ on $H^2(X,\bbQ)$.
It is clear from (\ref{eqn_BBF}) that this action preserves the BBF form $q$,
hence a homomorphism $\alpha\colon\mathrm{Aut}^\circ(H^\sdot(X,\bbQ))\to SO(V,q)$.

Let $\Gamma\subset \mathrm{Aut}^\circ(H^\sdot(X,\bbQ))(\bbQ)$ be a torsion-free arithmetic subgroup,
where the integral structure is determined by the integral cohomology of $X$.
The action of $\ker(\alpha)$ on $H^\sdot(X,\bbQ)$ preserves the Hodge structure
and the Hodge--Riemann bilinear forms. Therefore it preserves a positive definite
scalar product on $H^\sdot(X,\bbQ)$, and the real Lie group $\ker(\alpha)(\bbR)$
is compact. Since $\Gamma$ is torsion-free, it is mapped injectively into $SO(V,q)$
by $\alpha$. Passing to a finite index subgroup of $\Gamma$ we may assume that
$\alpha(\Gamma)$ is contained in the image of the morphism $\Spin(V,q)(\bbQ)\to SO(V,q)(\bbQ)$.
As a conclusion, we may assume that $\Gamma$ is contained in the image of $\lambda$.

Since the monodromy action preserves the integral cohomology of $X$,
the image of $\rho\colon\pi_1(B,t_0)\to \mathrm{Aut}^\circ(H^\sdot(X,\bbQ))$ is contained
in an arithmetic subgroup. Then $\rho^{-1}(\Gamma)$ is a finite index subgroup
of $\pi_1(B,t_0)$, and we define $B'$ to be the corresponding finite covering of $B$.
\end{proof}

\section{Andr\'e motives of varieties with quotient singularities}

\subsection{Motivated cycles}
Let $X$ be a non-singular complex projective variety, $n=\dim_\bbC(X)$.
If $\LL$ is an ample line bundle on $X$ with the first Chern class $h\in H^{2}(X,\bbZ)$,
the Lefschetz operator $L_h\in \emrp(H^\sdot(X,\bbC))$ induces isomorphisms
$$L_h^k\colon H^{n-k}(X,\bbC)\stackrel{\sim}{\to} H^{n+k}(X,\bbC)$$
for every $k=0,\ldots,n$.
Denote by $H^{k}_{\mathrm{pr}}(X,\bbC)\subset H^{k}(X,\bbC)$ the kernel of $L_h^{n-k+1}$ for
$k = 0,\ldots,n$. Denote by $*_h\in \emrp(H^\sdot(X,\bbC))$ the Lefschetz involution:
for $x\in H^{k}_{\mathrm{pr}}(X,\bbC)$ and $i=0,\ldots,n-k$ we have $*_h(L_h^ix) = L_h^{n-k-i}x$.

Consider two non-singular complex projective varieties $X$, $Y$ with two ample line bundles $\LL_1\in \mathrm{Pic}(X)$,
$\LL_2\in\mathrm{Pic}(Y)$. Let $h = c_1(p_X^*\LL_1\otimes p_Y^*\LL_2)\in H^2(X\times Y,\bbZ)$.
For arbitrary classes of algebraic cycles $\alpha,\beta\in H^\sdot_{\mathrm{alg}}(X\times Y,\bbQ)$, consider the class
\begin{equation}\label{eqn_clmot}
p_{X*}(\alpha\cup *_h\beta),
\end{equation}
where $p_X\colon X\times Y\to X$ is the projection. Let $H^\sdot_{M}(X,\bbQ)$
be the subspace of $H^\sdot(X,\bbQ)$ spanned by the classes (\ref{eqn_clmot})
for all $Y$, $\LL_1$, $\LL_2$, $\alpha$, $\beta$ as above. Elements of $H^\sdot_M(X,\bbQ)$ will
be called motivated cycles.

\subsection{Andr\'e motives}
We recall the definition of Andr\'e motives from \cite{An}.
For two non-singular connected projective varieties $X$, $Y$
let $\mathrm{Cor}_M^k(X,Y) = H^{k+\dim(X)}_M(X\times Y,\bbQ)$ be
the space of motivated correspondences. Define the $\bbQ$-linear category $\catMmot$
whose objects are triples $(X,p,n)$, where $X$ is a variety as above,
$p\in \mathrm{Cor}_M^0(X,X)$, $p\circ p = p$, and $n\in \bbZ$.
Define the morphisms from $(X,p,n)$ to $(Y,q,m)$ to be the subspace
$$q\circ \mathrm{Cor}^{m-n}_M(X,Y)\circ p \subset \mathrm{Cor}^{m-n}_M(X,Y).$$
Cartesian product of varieties provides a tensor product in $\catMmot$,
and it is known \cite{An} that $\catMmot$ is a semi-simple graded neutral Tannakian category.
We will denote the Andr\'e motive of a variety $X$ as follows:
$\bfM(X)(n) = (X,[\Delta_X],n)\in \catMmot$, where $\Delta_X$ is the diagonal in $X\times X$.
The K\"unneth components $\delta_k$ of the diagonal $[\Delta_X]$ are motivated cycles
and we have the decomposition $\bfM(X)(n) = \oplus_k \bfH^k(X)(n)$,
where $\bfH^k(X)(n) = (X, \delta_k, n) \in \catMmot$.
The motives $\bfM(A)(n)$ for all abelian varieties $A$ generate
the full Tannakian subcategory of $\catMmot$ that we denote $\catMab$
and call the category of abelian motives.

Assume now that $X$ is a projective orbifold. Let $r\colon Y\to X$
be a resolution of singularities. Since Poincar\'e duality with rational
coefficients holds for $X$, the Hodge structure on the cohomology $H^k(X,\bbQ)$
is pure and the pull-back morphism $r^*\colon H^k(X,\bbQ)\to H^k(Y,\bbQ)$
is injective, see \cite[Theorem 8.2.4(iv) and Proposition 8.2.5]{D2}.
Therefore $H^k(X,\bbQ)$ is a sub Hodge structure in the cohomology
of a smooth projective variety. We define the Andr\'e motive of $X$
as a submotive of $\bfM(Y)$ in a similar way.

Namely, we consider a simplicial resolution $\mathcal{Y}_\sdot \to X$ with 
$\mathcal{Y}_i$ smooth projective varieties and $\mathcal{Y}_0 = Y$, see \cite[6.2.5]{D2}.
We have the exact sequence \cite[Proposition 8.2.5]{D2}
$$
0\lrarr H^{k}(X, \bbC) \stackrel{r^*}{\lrarr} H^{k}(Y,\bbC) \stackrel{\delta_0^*-\delta_1^*}{\lrarr} H^{k}(\mathcal{Y}_1,\bbC),
$$
where $\delta_0,\delta_1\colon \mathcal{Y}_1 \rightrightarrows \mathcal{Y}_0 = Y$ are the face maps.
Note that $\delta_i^*$ define morphisms of motives $\bfH^k(Y)\to \bfH^k(\mathcal{Y}_1)$.

\begin{defn}
In the above setting we define
$$\bfH^k(X) = \mathrm{ker}\left(\bfH^{k}(Y) \stackrel{\delta_0^*-\delta_1^*}{\lrarr} \bfH^{k}(\mathcal{Y}_1)\right).$$
\end{defn}

Strictly speaking, the motive $\bfH^k(X)$ in the above definition depends on the choice
of the resolution $Y$. But if $r'\colon Y'\to X$ is another resolution, we can find
a third resolution $Y''$ that dominates both $Y$ and $Y'$. Using this, we see that the
submotives of $\bfH^{k}(Y)$ and $\bfH^{k}(Y')$ in the above definition are canonically
isomorphic. Therefore $\bfH^k(X)$ is well defined up to a canonical isomorphism and
we may ignore dependence on the resolution.

\section{Deformation principle for hyperk\"ahler orbifolds}

\subsection{Constructing families of hyperk\"ahler orbifolds with a given central fibre}

We start this section from the key technical result of the paper. Assume that we
have a projective family $\pi\colon \XX\to B$ of hyperk\"ahler varieties over a
quasi-projective base $B$. A fibre $X_0$ of this family represents
a point $p\in\mathfrak{M}$ in the moduli space. Given any other point $p'\in \mathfrak{M}$
that is non-separated from $p$, we may replace the fibre $X_0$ by the corresponding
variety $X_0'$. This gives a new family $\XX'$, possibly non-projective. This
operation involves gluing two families over a punctured disc, and after such
gluing $\XX'$ may a priori not even be Moishezon (a typical situation
when this happens is the Shafarevich--Tate twist, see \cite{AR} and \cite{SV}).
However, in the case of families of hyperk\"ahler varieties this does not happen
and the total space $\XX'$ of the new family is Moishezon, i.e. an algebraic space in the sense of Artin.

\begin{prop}\label{prop_family}
Let $X$ be an irreducible hyperk\"ahler orbifold and $h\in H^{1,1}(X)\cap H^2(X,\bbZ)$
a cohomology class with $q(h) > 0$. Then there exists a flat locally trivial
family $\pi\colon \XX\to C$ over a connected quasi-projective curve $C$ and
a line bundle $\LL\in \mathrm{Pic}(\XX)$ such that $\LL|_{\XX_t}$ is ample for
a general $t\in C$ and for some $t_0\in C$ we have $\XX_{t_0}\simeq X$
and $c_1(\LL|_{\XX_{t_0}}) = h$. 
\end{prop}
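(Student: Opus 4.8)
The plan is to construct the family in two stages: first build a projective family of $h$-polarized orbifolds over a quasi-projective curve whose periods pass through the period point of $X$, and then surgically replace a single fibre by $X$ itself using the non-separated points construction described above. To set up, fix a marking $\varphi$ of $X$ and let $p=\rho(X,\varphi)\in\DD$ be its period. Since $h\in H^{1,1}(X)$, it is $q$-orthogonal to the holomorphic symplectic class, so $\varphi(h)\perp[\varphi(\sigma)]$ and hence $p\in\DD_h$. Because $q$ has signature $(3,b_2-3)$ and $q(h)>0$, its restriction to $h^{\perp}$ has signature $(2,b_2-3)$, so $\DD_h$ is a type IV domain of dimension $b_2-3\ge 1$. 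By Theorem \ref{thm_torelli} and \cite{Me} the points of $\DD_h$ are periods of projective orbifolds carrying a line bundle of class $h$. Passing to the quotient by the arithmetic group of monodromy isometries fixing $h$ yields, through the Baily--Borel and Borel theorems, a quasi-projective variety with an algebraic period map; over the locus where $h$ is ample this is, after a level structure, a fine moduli space with a universal polarized family.

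Next I would fix the $h$-ample birational model of $X$. Among the orbifolds with period $p$, the wall-and-chamber decomposition of the positive cone---the orbifold analogue of the smooth theory, available through the global Torelli theorem---provides one model $Y_0$ whose ample cone contains $h$ (after replacing $h$ by $-h$ if necessary). Thus the image $\bar p$ of $p$ lies in the polarized moduli space, with fibre $Y_0$. I would then choose a general quasi-projective curve $C$ through $\bar p$, which is possible since the moduli space has positive dimension $b_2-3$. For $C$ general, a very general point $t\in C$ parametrizes an orbifold $Y_t$ with $\mathrm{NS}(Y_t)=\bbZ h$, for which the positive class $h$ is necessarily ample. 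Pulling back the universal family and its polarization along $C$ produces a projective family $\pi_0\colon\XX_0\to C$ with a relatively ample bundle $\LL_0$ of fibrewise class $h$, with $\XX_{0,t_0}\simeq Y_0$ over the point $t_0$ above $\bar p$ and with $\LL_0$ ample on the general fibre.

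The final and most delicate step is to replace the fibre $Y_0$ by $X$. Since $X$ and $Y_0$ share the period $p$, they are non-separated points of $\mathfrak{M}$, hence bimeromorphic; as hyperk\"ahler bimeromorphic maps are isomorphisms in codimension one, the map $X\dashrightarrow Y_0$ identifies the $H^2$ lattices compatibly with the markings and sends $h$ to $h$. Performing the gluing surgery described above---excising the fibre over a small disc around $t_0$ and glueing in the locally trivial family for $X$---yields a flat locally trivial family $\pi\colon\XX\to C$ with $\XX_{t_0}\simeq X$, isomorphic to $\XX_0$ over $C\setminus\{t_0\}$; as flagged before the statement, for hyperk\"ahler families this surgery preserves the Moishezon property, so $\XX$ is an algebraic space. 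Because the modification takes place in codimension $\ge 2$, the bundle $\LL_0$ extends to $\LL\in\mathrm{Pic}(\XX)$, still ample on the general fibre, whose restriction to $\XX_{t_0}\simeq X$ has class the image of $h$ under the codimension-one identification, namely $h$ itself---even though $h$ need not be ample on $X$. I expect the control of this fibre surgery, both the preservation of the Moishezon property and the extension of $\LL_0$ across the new central fibre, to be the principal obstacle.
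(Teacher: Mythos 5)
Your overall skeleton---(i) build a projective family of $h$-polarized orbifolds over a quasi-projective curve whose periods pass through $\overline{p}$, (ii) swap the fibre over that point for $X$ using non-separatedness of the moduli space---is the same as the paper's. But two load-bearing steps in your version are not justified, and the paper's proof is structured precisely so as to avoid both. The serious one is your appeal to a ``wall-and-chamber decomposition of the positive cone'' to produce an $h$-ample birational model $Y_0$ of $X$ with the same period. This is not a consequence of the global Torelli theorem: even in the smooth case it rests on Markman's monodromy and K\"ahler-cone results and on the MMP for existence of models, and no orbifold analogue is available in the literature the paper can draw on (Menet's Torelli theorem does not provide it). Your construction genuinely needs $Y_0$, since a curve ``through $\overline{p}$'' inside a polarized moduli space only makes sense if $\overline{p}$ actually lies in that space. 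The paper never needs this: it takes a \emph{very general} point $Y$ of $\MM_h$ (where Picard rank one forces the class $h$ to be ample), embeds $Y$ by a power of the polarization, passes to the locus $\HH$ in the Hilbert scheme with its universal family, and uses only that the algebraic period map $\HH\to\DD_h/\Gamma$ is dominant, so that some curve $C^\circ\subset\HH$ has period image whose closure contains $\overline{p}$. The fibre over the limit point $t_0$ of the compactified curve may well not be isomorphic to $X$ (it need not even lie over the polarized locus), and this is exactly what the gluing lemma \cite[Lemma 6.2]{S1} repairs: it replaces the family over a disc around $t_0$ by one isomorphic to it over the punctured disc and with central fibre $X$. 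Thus the hard, unavailable input in your argument is traded for Torelli plus a filling lemma.

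Second, your ``fine moduli space with a universal polarized family'' obtained ``after a level structure'' from the Baily--Borel quotient is also unsupported for orbifolds; producing an actual family with a relatively ample bundle over a quasi-projective base is precisely what the Hilbert scheme device accomplishes, and the paper uses it for that reason. Two smaller points. Your parenthetical ``after replacing $h$ by $-h$ if necessary'' is incompatible with the required conclusion $c_1(\LL|_{\XX_{t_0}})=h$: the sign of $h$ is fixed by the statement, and since the sign of $q(h,\omega_t)$ (with $\omega_t$ a K\"ahler class) is locally constant in a connected locally trivial family, a wrong sign cannot be repaired later in the construction, so this must be addressed up front rather than absorbed silently. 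And your extension of $\LL_0$ across the new central fibre ``because the modification takes place in codimension $\ge 2$'' is shaky: the locus where the two families differ is a fibre, i.e.\ a divisor in the total space, and on a total space with quotient singularities a Weil divisor class need not be Cartier. The robust argument is deformation-theoretic: $h$ remains of type $(1,1)$ on every fibre and $h^1(\OO)=0$ for the fibres, so the line bundle with class $h$ spreads out and glues via the exponential sequence.
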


\begin{proof}
We pick a marking $\varphi\colon H^2(X,\bbZ)/(\mathrm{torsion})\stackrel{\sim}{\to}\Lambda$
and identify $h$ with its image in $\Lambda$. Denote by $p\in\MM_h$
the point corresponding to $(X,\varphi)$, where $\MM_h$ is the corresponding
connected component of the moduli space $\mathfrak{M}_h$ (we use the
notation introduced in section \ref{sec_moduli}).

Note that by the projectivity criterion \cite[Theorem 6.9]{BL} the variety $X$
is projective. Moreover, for a very general point of $\MM_h$ the corresponding
variety $Y$ has Picard rank one, and the line bundle $L$ over $Y$ with first Chern class $h$
is ample. Embed the variety $Y$ into a projective space using some power of $L$.
Then $Y$ represents a point in the Hilbert scheme of that projective space.
Take the connected component of this Hilbert scheme containing the point $[Y]$
and denote by $\HH$ the open subset parametrizing locally trivial deformations
of $Y$. Let $\YY\to \HH$ be the universal family. After possibly passing to a
finite cover of $\HH$ we obtain the period map $\rho\colon \HH\to \DD_h/\Gamma$,
where $\Gamma$ is an arithmetic subgroup of the orthogonal group.

The period map $\rho$ is a dominant morphism of algebraic varieties.
It follows that we can find a curve $C^\circ\subset \HH$ with projective
closure $C$ such that $\rho(C)$ passes through $\overline{p}$,
where $\overline{p}$ is the image of $p$ in $\DD_h/\Gamma$. Let $t_0\in C$ be a point with $\rho(t_0) = \overline{p}$.
We let $\XX\to C$ be the restriction of the universal family over
the Hilbert scheme to $C$. We may assume that the curve $C$ is
general, so that for a general $t\in C$ the fibre $\XX_t$ has
Picard rank one, generated by an ample line bundle with first Chern class $h$.

The fibre $\XX_{t_0}$ may be not isomorphic to $X$. Let $\Delta\subset C$
be a small disc around $t_0$ and $\Delta^* = \Delta\setminus \{t_0\}$.
We use \cite[Lemma 6.2]{S1} to replace $\XX|_{\Delta}$ with another
family, isomorphic to $\XX|_{\Delta}$ over $\Delta^*$ and with central fibre isomorphic to $X$.
This completes the proof.
\end{proof}

\subsection{The main result} Proposition \ref{prop_family} allows us
to construct enough families of irreducible hyperk\"ahler orbifolds
to connect any two points in the moduli space. Then we can apply deformation
principle for motivated cycles to these families. Using the Kuga--Satake
construction we obtain the following result.

\begin{thm}\label{thm_main}
Assume that $X_1$ and $X_2$ are deformation equivalent projective irreducible hyperk\"ahler
orbifolds with $b_2\ge 4$. If $\bfM(X_1)\in\catMab$ then $\bfM(X_2)\in\catMab$.
\end{thm}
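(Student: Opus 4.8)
The plan is to reduce the statement to a property of the weight-two motive $\bfH^2$ and to transport that property through the families furnished by Proposition~\ref{prop_family}, using the Kuga--Satake construction together with Andr\'e's deformation principle for motivated cycles. First I would record the reduction: for an irreducible hyperk\"ahler orbifold $X$ with $b_2\ge 4$ one has $\bfM(X)\in\catMab$ if and only if $\bfH^2(X)\in\catMab$. One implication is immediate, since $\catMab$ is a Tannakian subcategory closed under direct summands. For the converse I would use Proposition~\ref{prop_LLV}: the operators generating $\frso(V,q)$ are commutators of Lefschetz operators $L_h$ (which are algebraic) and their duals (which are motivated after Andr\'e), so $\frso(V,q)$ acts on $H^\sdot(X,\bbQ)$ by motivated endomorphisms and the isotypic decomposition for $\frso(\tilde V,\tilde q)$ is a decomposition of Andr\'e motives. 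Since this action is an honest $\SO(\tilde V,\tilde q)$-action, every irreducible constituent occurs in a tensor power of $\tilde V=\langle e_0\rangle\oplus\bfH^2(X)\oplus\langle e_4\rangle$; as $e_0,e_4$ span Tate pieces and Tate motives are abelian (e.g. $\bbQ(-1)=\bfH^2$ of an elliptic curve), each $\bfH^k(X)$ lies in the Tannakian subcategory generated by $\bfH^2(X)$, whence $\bfH^2(X)\in\catMab$ forces $\bfM(X)\in\catMab$.

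Next I would propagate the property ``$\bfH^2\in\catMab$'' along a family $\pi\colon\XX\to C$ as in Proposition~\ref{prop_family}. The local system $R^2\pi_*\bbQ$ underlies a polarized variation of Hodge structure of K3 type (weight two with $h^{2,0}=1$), and the hypothesis $b_2\ge 4$ is what makes the associated quadratic space of signature $(2,\ast)$ amenable to Kuga--Satake. By Proposition~\ref{prop_monodr}, after a finite base change $C'\to C$ the monodromy lifts through $\Spin(V,q)$, which is precisely what is needed to globalise the construction to a family of abelian varieties $\AA\to C'$ together with a flat, fibrewise Hodge embedding $\kappa_t\colon H^2(\XX_t,\bbQ)\hrarr\End(H^1(\AA_t,\bbQ))$. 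Because the deformation is locally trivial, the fibrewise quotient singularities are constant along the base, so functorial resolution produces a simultaneous resolution and a simplicial resolution $\mathcal{Y}_\sdot\to\XX$ whose terms are smooth and projective over $C'$, with the inclusions $H^2(\XX_t,\bbQ)\hrarr H^2(\mathcal{Y}_{0,t},\bbQ)$ forming a map of local systems. Read through this resolution and via Poincar\'e duality, $\kappa$ transports to a flat section of a local system of the smooth projective family $\mathcal{Y}_0\times_{C'}\AA\to C'$, each value of which is a Hodge class on $\mathcal{Y}_{0,t}\times\AA_t$ restricting to $\kappa_t$ on the summand $\bfH^2(\XX_t)\subset\bfH^2(\mathcal{Y}_{0,t})$.

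Along each such family I would then argue that ``$\bfH^2\in\catMab$'' at one fibre forces it at every fibre. If $\bfH^2(\XX_{t_0})\in\catMab$, then $\kappa_{t_0}$ is a Hodge class between objects of $\catMab$ and is therefore motivated, by Deligne's theorem that Hodge cycles on abelian varieties are motivated; Andr\'e's deformation principle, applied to the smooth projective family above, then makes $\kappa_t$ motivated for every $t\in C'$; and a motivated $\kappa_t$ realises $\bfH^2(\XX_t)$ as a submotive of $\End(\bfH^1(\AA_t))$, hence places it in $\catMab$. To link $X_1$ and $X_2$ I would choose, by Proposition~\ref{prop_family}, integral $(1,1)$-classes $h_i$ with $q(h_i)>0$ and build families $\XX^{(i)}\to C_i$ degenerating Picard-rank-one polarised orbifolds $Y_i$ to $X_i$. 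Since $X_1$ and $X_2$ are deformation equivalent, the $Y_i$ can be arranged to lie in one connected component $\HH$ of the relevant Hilbert scheme (as in the proof of Proposition~\ref{prop_family}), and are joined there by a chain of curves; propagating along $C_1$, along this chain in $\HH$, and along $C_2$ carries ``$\bfH^2\in\catMab$'' from $X_1$ to $X_2$, and the reduction of the first paragraph then yields $\bfM(X_2)\in\catMab$.

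The main obstacle will be the orbifold incarnation of Andr\'e's deformation principle: making the Kuga--Satake correspondence into a genuinely flat, fibrewise Hodge class that lives on a \emph{smooth projective} family, so that Andr\'e's theorem applies and then descends to the orbifold submotive $\bfH^2(\XX_t)$. This hinges on the simultaneous resolution of the locally trivial family (where the codimension hypothesis $\mathrm{codim}_X X^{\mathrm{sg}}\ge4$, via local triviality, is essential), on the flatness of $H^2(\XX_t,\bbQ)\hrarr H^2(\mathcal{Y}_{0,t},\bbQ)$, and on the spin lifting of Proposition~\ref{prop_monodr} used to globalise $\AA\to C'$ and $\kappa$. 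Once these compatibilities are secured the argument proceeds as in the smooth case of \cite{S1}.
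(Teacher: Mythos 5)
Your overall architecture (families from Proposition~\ref{prop_family}, the spin lifting of Proposition~\ref{prop_monodr}, simultaneous resolution, Kuga--Satake plus the deformation principle) parallels the paper, but your opening ``reduction'' paragraph contains a genuine gap, and it is exactly the point the paper's proof is built to avoid. You claim that, since $\frgt(X)\cong\frso(\tilde V,\tilde q)$ acts by motivated operators, every irreducible constituent of $H^\sdot(X,\bbQ)$ occurs in a tensor power of $\tilde V$, so that $\bfH^2(X)\in\catMab$ forces $\bfM(X)\in\catMab$. This is false in general, because the LLV representation need not factor through $\SO(\tilde V,\tilde q)$: the grading operator exponentiates to a central element of the spin group which maps to the identity in $\SO(\tilde V)$ yet acts on $H^k(X,\bbC)$ as $(-1)^k$. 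Hence, whenever $X$ has nonzero odd cohomology, $H^{\mathrm{odd}}(X,\bbQ)$ is a sum of genuine spin-type representations of $\Spin(\tilde V,\tilde q)$, and such representations never occur in tensor powers of the standard representation $\tilde V$. This is not hypothetical: for the generalized Kummer fourfold $K^2\!A$ (the starting point of the second example in Section~\ref{sec_examples}) the odd cohomology is precisely the $16$-dimensional spin representation of $\frso(9)$. Consequently the Tannakian subcategory generated by $\bfH^2(X)$ and Tate objects need not contain $\bfH^{\mathrm{odd}}(X)$, and knowing $\bfH^2(X_2)\in\catMab$ does not yield $\bfM(X_2)\in\catMab$. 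Your propagation step inherits the same defect, since you transport only the classical weight-two Kuga--Satake class $\kappa_t\colon H^2(\XX_t,\bbQ)\hrarr\End(H^1(\AA_t,\bbQ))$ along the family.

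The paper's proof avoids this by propagating the Kuga--Satake embedding for the \emph{full} cohomology, from \cite{KSV} and \cite[Corollary 3.3]{S1}: an embedding of $H^\sdot(X,\bbQ)$ into the cohomology of (powers of) the Kuga--Satake abelian variety, which exists precisely because $H^1$ of that abelian variety realizes the spin representation, hence all $\Spin(\tilde V)$-constituents, odd cohomology included. This embedding is a flat section of $R^\sdot\psi_*\bbQ$ for $\psi\colon\widetilde{\XX}\times_C\AA\to C$; its value at the fibre $X_1$ is motivated because $\bfM(X_1)\in\catMab$ and Hodge classes in realizations of abelian motives are motivated (this is Andr\'e's theorem, not Deligne's---Deligne gives only absolute Hodge), and the deformation principle \cite[Proposition 5.1]{S1} then makes the value at the fibre $X_2$ motivated, exhibiting all of $\bfM(X_2)$ as a submotive of an abelian motive. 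If you replace your weight-two class $\kappa$ by this full embedding, your second and third paragraphs go through essentially as written and the reduction paragraph becomes unnecessary; as it stands, your argument proves the theorem only for orbifolds with vanishing odd cohomology.
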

\begin{proof}
We choose two markings $\varphi_i\colon H^2(X_i,\bbZ)/(\mathrm{torsion})\stackrel{\sim}{\to}\Lambda$
such that $(X_1,\varphi_1)$ and $(X_2,\varphi_2)$ are in the same connected component $\MM$
of the moduli space. Using the construction \cite[section 6.2]{S1},
we reduce to the case when $(X_1,\varphi_1)$ and $(X_2,\varphi_2)$ lie in $\MM_h$
for some $h\in \Lambda$ with $q(h)>0$. Then we apply Proposition \ref{prop_family}
and obtain a locally trivial family $\pi\colon \XX\to C$ such that $X_1$ and $X_2$ are
two fibres, and there exists a line bundle $\LL\in\mathrm{Pic}(\XX)$ that is relatively ample over
a general point of $C$.

Using Proposition \ref{prop_monodr} we construct a family of Kuga--Satake abelian varieties
$\alpha\colon\AA\to C$ attached to $\pi\colon\XX\to C$, analogously to \cite[Lemma 6.3]{S1}.
Let $\rho\colon \widetilde{\XX}\to \XX$ be a simultaneous resolution of singularities
(see \cite[Lemma 4.9]{BL}), where $\tilde{\pi}\colon \widetilde{\XX}\to C$ is a smooth
family. Since the fibres of $\pi$ have quotient singularities, we have an embedding
of variations of Hodge structures:
$$
R^\sdot\pi_*\bbQ\hrarr R^\sdot\tilde{\pi}_*\bbQ
$$
We also have the Kuga--Satake embedding for the family $\XX$, see \cite{KSV} and \cite[Corollary 3.3]{S1}.
This embedding is given by a section of the local system $R^\sdot\psi_*\bbQ$,
where $\psi\colon \widetilde{\XX}\times_C \AA\to C$. Since the Andr\'e motive of
$X_1$ is abelian, the value of that section at one point is a motivated cycle.
By a version of the deformation principle proven in \cite[Proposition 5.1]{S1},
the value of the section at any point of $C$ is a motivated cycle, hence the
motive of $X_2$ is embedded into the motive of an abelian variety. This concludes the proof. 
\end{proof}

\subsection{Examples of irreducible hyperk\"ahler orbifolds}\label{sec_examples}

We apply Theorem \ref{thm_main} to two families of irreducible hyperk\"ahler orbifolds.

For the first example consider a $K3$ surface $S$ with a symplectic involution $f$.
Then $f$ has 8 fixed points. Consider the induced involution $f^{[2]}$ of the Hilbert square $S^{[2]}$.
The involution $f^{[2]}$ has 28 isolated fixed points and a fixed surface $S'\subset S^{[2]}$
that is isomorphic to the resolution of $S/f$.

Let $Y$ be the blow-up of $S^{[2]}$ in $S'$ and $X$ be the quotient of $Y$ by the action
of $f^{[2]}$. Note that by \cite{An} the motives of $S^{[2]}$, $S'$ and $Y$ are abelian,
hence $\bfM(X)\in\catMab$. It is clear that $X$ is a symplectic orbifold with only
isolated singularities. Since the involution $f^{[2]}$ fixes the exceptional divisor $E\subset Y$
of the blow-up $Y\to S^{[2]}$, the orbifold fundamental group of $X$ is trivial,
hence $X$ is irreducible. Note also that $b_2(X) = b_2(S^{[2]}/f^{[2]}) + 1$,
so a general deformation of $X$ is not obtained from a deformation of $S^{[2]}$.

A similar construction applies to the generalized Kummer variety $K^2\!A$ if we start
from an abelian surface $A$ with an involution $f$. We obtain an irreducible
orbifold $X'$ as a partial resolution of the quotient of $K^2A$ by an involution induced
by $f$. 

\begin{cor}
Any projective hyperk\"ahler orbifold deformation equivalent either to $X$
or to $X'$ described above has an abelian Andr\'e motive.
\end{cor}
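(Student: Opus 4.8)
The plan is to obtain the corollary as a direct application of Theorem~\ref{thm_main}. That theorem propagates abelianity of the André motive across an entire deformation class of projective irreducible hyperk\"ahler orbifolds with $b_2\ge 4$, as soon as one member of the class has an abelian motive. It therefore suffices to check that $X$ and $X'$ satisfy every hypothesis of Theorem~\ref{thm_main}, and that the relevant properties are deformation invariant; the orbifolds $X$ and $X'$ will then serve as the distinguished members of their respective deformation classes.

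First I would confirm that the hypotheses are stable in the deformation class. By the codimension condition defining irreducibility, every deformation of $X$ (resp. $X'$) is locally trivial, so being a projective irreducible hyperk\"ahler orbifold is preserved, and since a locally trivial family is topologically a fibre bundle, both $\pi_1(X^{\mathrm{sm}})$ and the codimension of $X^{\mathrm{sg}}$ are constant; moreover $b_2$ is a topological invariant and hence constant along such a family. Consequently any projective hyperk\"ahler orbifold $X_2$ deformation equivalent to $X$ (resp. $X'$) is automatically irreducible with the same $b_2$. Next I would verify the hypotheses for $X$ and $X'$ themselves. That $X=Y/f^{[2]}$ is projective and irreducible is recorded in Section~\ref{sec_examples}. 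Its motive is abelian because $H^k(X,\bbQ)=H^k(Y,\bbQ)^{f^{[2]}}$ is cut out of $\bfM(Y)$ by the motivated projector $\tfrac12\bigl([\Delta_Y]+[\Gamma_{f^{[2]}}]\bigr)$, so that $\bfM(X)$ is a direct summand of the abelian motive $\bfM(Y)$ (itself abelian by the blow-up formula applied to the abelian motives of $S^{[2]}$ and $S'$), and $\catMab$ is closed under direct summands. Finally I would compute $b_2(X)$: since $H^2(X,\bbQ)=H^2(Y,\bbQ)^{f^{[2]}}$ and the invariant cohomology of the Nikulin involution has large rank, one finds $b_2(X)=16\ge 4$. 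The same three points hold for $X'$, using that $\bfM(K^2\!A)\in\catMab$ and that $b_2(X')$ is bounded below by the rank of the $f$-invariant part of $H^2(K^2\!A,\bbQ)$, which is again $\ge 4$.

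With all hypotheses verified, applying Theorem~\ref{thm_main} with $X_1=X$ (resp. $X_1=X'$) and $X_2$ the given deformation-equivalent orbifold yields $\bfM(X_2)\in\catMab$, which is the corollary. The only points requiring genuine care beyond the cited theorem are the identification of the orbifold motive $\bfM(X)$ with the invariant submotive $\bfM(Y)^{f^{[2]}}$ --- that is, the compatibility of the simplicial-resolution definition of $\bfH^k(X)$ with passing to invariants under the finite group action --- and the explicit lower bound $b_2\ge 4$. I expect the former to be the main technical subtlety, since it is what licenses the passage from the abelian motives of the smooth models $S^{[2]}$, $S'$, $Y$, $K^2\!A$ to the abelian motive of the orbifold; the latter is a routine cohomology computation. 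Neither is a serious obstacle, so the substantive content of the corollary lies entirely in Theorem~\ref{thm_main}.
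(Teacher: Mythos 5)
Your proposal is correct and follows essentially the same route as the paper: the corollary is just Theorem~\ref{thm_main} applied with $X_1=X$ (resp.\ $X'$), using the facts recorded in Section~\ref{sec_examples} (projectivity, irreducibility via triviality of the orbifold fundamental group, $b_2(X)=b_2(S^{[2]}/f^{[2]})+1=16\ge 4$, and $\bfM(X)\in\catMab$ because it is a summand of the abelian motive $\bfM(Y)$), together with the deformation invariance of these hypotheses that you spell out. The one point you flag as the main subtlety --- identifying the resolution-based motive $\bfH^\sdot(X)$ with the invariant submotive of $\bfM(Y)$ cut out by the projector $\tfrac12\bigl([\Delta_Y]+[\Gamma_{f^{[2]}}]\bigr)$ --- is treated even more tersely in the paper (it simply asserts that abelianity of $\bfM(S^{[2]})$, $\bfM(S')$, $\bfM(Y)$ implies $\bfM(X)\in\catMab$), so your writeup is, if anything, more careful than the original.
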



\end{document}